\documentclass[11pt,a4paper]{amsart} 
\usepackage[all]{xy}
\usepackage{amssymb}
\usepackage{braket}
\SelectTips{cm}{}
\addtolength{\textwidth}{2cm} 
\calclayout \makeatletter
\def\serieslogo@{} 
\def\@setcopyright{} 
\makeatother

\title[The finitistic dimension of algebras with a directed stratification]{The finitistic dimension of algebras with a directed stratification}

\author{Karsten Dietrich}
\address{Karsten Dietrich\\ Fakult\"at f\"ur Mathematik\\
Universit\"at Bielefeld\\ D-33501 Bielefeld\\ Germany.}
\email{karsten.dietrich@gmx.net}

\subjclass[2000]{16E10, 16G10}

\newtheorem{lem}{Lemma}[section]
\newtheorem{prop}[lem]{Proposition}
\newtheorem{cor}[lem]{Corollary}
\newtheorem{thm}[lem]{Theorem}

\theoremstyle{definition}
\newtheorem{exm}[lem]{Example}
\newtheorem{Rem}[lem]{Remark}
\newtheorem{defn}[lem]{Definition}

\numberwithin{equation}{section}
\newtheorem*{ackn}{Acknowledegement}

\renewcommand{\mod}{\operatorname{mod}\nolimits}
\newcommand{\rep}{\operatorname{rep}\nolimits}

\newcommand{\Fun}{\operatorname{Fun}\nolimits}
\newcommand{\Aut}{\operatorname{Aut}\nolimits}
\newcommand{\Stab}{\operatorname{Stab}\nolimits}
\newcommand{\findim}{\operatorname{fin.dim}\nolimits}
\newcommand{\gldim}{\operatorname{gl.dim}\nolimits}
\newcommand{\repdim}{\operatorname{rep.dim}\nolimits}
\newcommand{\Ob}{\operatorname{Ob}\nolimits}

\newcommand{\rad}{\operatorname{rad}\nolimits}

\newcommand{\pd}{\operatorname{proj.dim}\nolimits}
\newcommand{\id}{\operatorname{id}\nolimits}

\newcommand{\Mod}{\operatorname{Mod}\nolimits}
\newcommand{\Findim}{\operatorname{Fin.dim}\nolimits}
\renewcommand{\mod}{\operatorname{mod}\nolimits}

\newcommand{\Hom}{\operatorname{Hom}\nolimits}

\renewcommand{\Im}{\operatorname{Im}\nolimits}

\newcommand{\Ext}{\operatorname{Ext}\nolimits}
\newcommand{\Tor}{\operatorname{Tor}\nolimits}

\def\A{{\mathcal A}}
\def\B{{\mathcal B}}
\def\C{{\mathcal C}}

\def\M{{\mathcal M}}
\def\N{{\mathcal N}}
\def\P{{\mathcal P}}

\begin{document}

\begin{abstract}
	We introduce the notion of a directed stratification for a finite-dimensional algebra. For algebras that admit such a stratification we characterise the projective resolutions of finitely generated modules and obtain a result for the finitistic dimension, which is an inductive version of a result of Fossum, Griffith and Reiten. With the developed techniques, which are adopted from the theory of EI-category algebras, we gain deeper insight in the combinatorial nature of this result. A characterisation of algebras which do not admit a directed stratification is given in terms of the Ext-quiver.
	\end{abstract}

\maketitle
\setcounter{tocdepth}{1}
\tableofcontents

\section{Introduction}
The finitistic dimensions of a ring $\Lambda$ provide a measure for the complexity of the module category of $\Lambda$. They are defined as
\begin{eqnarray*}
	\findim (\Lambda) &=& \sup \Set{ \pd M | M \in \mod (\Lambda), \pd M < \infty } \\
    \Findim (\Lambda) &=& \sup \Set{ \pd M | M \in \Mod (\Lambda), \pd M < \infty }. 
	\end{eqnarray*}
	There are at least two canonical questions that arise in studying these invariants, namely: Are these two dimensions finite for any ring $\Lambda$ and do they coincide? For noetherian rings both questions have to be answered in the negative, but for finite-dimensional algebras there is no counterexample up to now. In 1960 Bass published the two questions for finite-dimensional algebras as ``problems''   and they are nowadays known as the finitistic dimension conjectures. 
	 
The little finitistic dimension, $\findim$,  is known to be finite for certain classes of algebras, for example for algebras with representation dimension at most $3$, monomial algebras or algebras with radical cube zero. One may consult \cite{Birge_tale} for a survey on this conjecture and other homological conjectures (not including the result of Igusa and Todorov from \cite{IT} concerning the relation of the representation dimension and the finitistic dimension).

First, we will introduce the notion of a directed stratification for a finite-dimensional algebra. This definition is inspired by the study of EI-category algebras, where one knows that the finitistic dimension is always finite, see \cite{Ich} for a survey on the finitistic dimension of EI-category algebras. The proof of this finiteness of $\findim$ for EI-category algebras reduces the problem to the finitistic dimension of the group algebras of the automorphism groups. This concept will be generalised to algebras with a directed stratification in the second and third section, where we show that finiteness of the finitistic dimension of such an algebra only depends on the finitistic dimensions of the strata.  We want to emphasize that this reduction technique has already been obtained by Fossum, Griffith and Reiten and Fuller, Saorin as a special case of a theorem in the context of trivial extensions of abelian categories, compare \cite{FGR,FullerSaorin}. Nevertheless, the approach we will present in terms of representations of a category which is associated to an algebra with a directed stratification gives a very convenient combinatorial description of the projective resolutions of the modules. We will also give a characterisation of algebras which are minimal in the sense, that they do not admit a non-trivial directed stratification in terms of their Gabriel quiver (Ext quiver). Finally, in the last section, we relate the mentioned reduction technique to other known results for the finitistic dimension, for example to results of Happel \cite{Happel}, Cline, Parshall and Scott \cite{CPS},\cite{CPS2},\cite{CPS3} and Huisgen-Zimmermann \cite{Birge_monomial}.
\begin{ackn}
This paper is based on parts of my PhD-thesis under the supervision of Henning Krause. I would like to thank him for his guidance and inspiring discussions on the topic.

The first version of this paper contained a proof of the finiteness of the finitistic dimension for EI-category algebras. I am grateful to Jesper Grodal for pointing out the relevance of Lueck's work \cite{Lueck} which already contains a proof of this fact.

Finally, I would like to thank Birge Huisgen-Zimmermann and Steffen Koenig for their helpful remarks which led me in the right direction.
\end{ackn}
%%%%%%%%%%%%%%%%%%%%%%%%%%%%%%%%%
%%%%%%%%%%%% Trivial extensions %%%%%%%%%%%%%%%%%%
%%%%%%%%%%%%%%%%%%%%%%%%%%%%%%%%%%

%%%%%%%%%%%%%%%%%%
%%%%%%Basic Notions and Properties%%%%%%%%%%%%%%%%%%
%%%%%%%%%%%%%%%%
\section{Basic notions and properties}
In \cite{CPS} Cline, Parshall and Scott introduced the notion of a stratifying ideal as well as the notion of stratified and standardly stratified algebras. 
\begin{defn}
An ideal $J$ in an algebra $A$ is called \emph{stratifying} if the following conditions are satisfied.
\begin{itemize}
\item[(i)] $J = AeA$ for some idempotent $e \in A$,
\item[(ii)] Multiplication induces an isomorphism $Ae \otimes_{eAe} eA\tilde{\to} J$,
\item[(iii)] $\Tor_{n}^{eAe}(Ae,eA) = 0$ for all $n > 0$. 
\end{itemize}
It was also observed by Cline, Parshall and Scott that an ideal $J$ in $A$ is stratifying if and only if the derived functor $i_{\star}: \mathbf{D}^{+}(A/J) \to \mathbf{D}^{+}(A)$ induced by the exact inflation functor $i_{\star}: \mod A/J \to \mod A$ is a full embedding. \\
A \emph{stratification of $A$ of length $n$} is a chain \[0 = J_0 \subset J_1 \subset \dots \subset J_n = A \]
of ideals with the property that $J_i / J_{i-1}$ is a stratifying ideal in $A/J_{i-1}$. The stratification is called (left-)\emph{standard} if $J_i / J_{i-1}$ is projective (as left $A/J_{i-1}$-module).
\end{defn}
The class of algebras which we will define and deal with fits in this framework. We will make this precise in Remark \ref{vergleich}.
\begin{defn}
Let $A$ be a finite-dimensional algebra over some field $k$. Then we say that $A$ has a \emph{directed stratification of length $n$} if there exist pairwise orthogonal idempotents $e_1,\dots,e_n$ in $A$ with $\sum_{i=1}^{n} e_i = 1_A$ such that $e_i A e_j = 0$ for all $i<j$. 	
\end{defn}
One should note that we do not require the idempotents to be primitive. It is clear that every algebra admits a directed stratification of length $1$ given by its identity element, but in this case the theory we will develop will give us nothing new. 
\begin{exm}
\begin{itemize}
\item[(1)] Let $\C$ be a finite and skeletal EI-category with $n$ objects and $A = k\C$ its category algebra. Then we have a partial order defined on the set of objects of $\C$ which gives us a directed stratification of length $n$ given by the idempotents $1_{X_i}$, where $X_i, i=1,\dots,n$ are the objects of $\C$ and the numbering respects the partial order.
\item[(2)] Let $Q$ be any finite quiver without oriented cycles and $I$ any admissible ideal in $kQ$. Then $A = kQ/I$ admits a directed stratification of length $|Q_0|$ given by the primitive idempotents $\varepsilon_i$ with a suitable numbering. 
\end{itemize}
\end{exm}
As a matter of fact, one can identify the module category of an algebra $A$ with a directed stratification given by $e_1,\dots,e_n$ with the category of representations of a certain category $\A$ which we will define now.
\begin{defn}
Let $A$ be as above. Then the associated category $\A$ is defined as follows. The objects $x_1,\dots,x_n$ of $\A$ are in bijective correspondence with the idempotents $e_1,\dots,e_n$ that define our stratification and the morphisms $x_i \to x_j$ are in bijective correspondence with a $k$-basis of $e_jAe_i$. Under the assumption that $A$ is a finite-dimensional algebra, the category $\A$ is finite.
\end{defn}
By means of this definition, $A$ is the category algebra $k\A$ of $\A$.\\
In this setting it is a well-known fact due to Mitchell, that the categories $\rep_k(\A) = \Fun(\A,\mod k)$ and $\mod A$ are equivalent. For this reason, we will switch frequently between the concepts of representations and modules without any further explanation. For instance for an $A$-module $M$ we write $M(x_i)$ for its evaluation at the object $x_i$ as a functor.

\begin{Rem}\label{vergleich}
With the above characterisation of $A$ as the category algebra of $\A$, we may apply a result of Webb \cite[Proposition 2.2]{Webb} which almost immediately gives that any algebra with a directed stratification is also stratified in the sense of Cline, Parshall and Scott. Precisely, if $A$ has a directed stratification given by $e_1,\dots, e_n$, then we take $J_i = A(\sum_{l=n-i}^{n}e_l)A$. These are indeed stratifying ideals by Webbs theorem and therefore give a stratification of $A$ of length $n$. Another result of Webb \cite[Theorem 2.5]{Webb} characterizes the standardly stratified EI-category algebras to be exactly those, that are given by an EI category $\C$ in which for every morphism $\alpha: x \to y$ the group $\Stab_{\Aut(y)}(\alpha) = \Set{ \theta \in \Aut(y) | \theta\alpha = \alpha}$ has order invertible in $k$. Therefore, we may for instance take the category algebra $k\C$ of the following EI category
\[\C: 
\xymatrix{y \ar@(ul,dl)[]_{1_y}\ar[rr]^{\alpha}  && x  
\ar@(ur,ul)[]_{1_x} \ar@(dl,dr)[]_h},\; \; h^2 = 1_x, \; h\alpha = \alpha. \]
If $k$ has characteristic $2$, then the category algebra $k\C$ is not standardly stratified but it clearly admits a directed stratification given by the idempotents $1_x, 1_y$.

Hence, an algebra with a directed stratification is always stratified but in general not standardly stratified. This is an interesting point since the finitistic dimension conjecture is known to hold for standardly stratified algebras by work of \'Agoston, Happel, Luk\'acs and Unger \cite{Happeletal} while it is still open for  stratified algebras.
 \end{Rem}

We can describe the algebras $A$ which do not admit a non-trivial directed stratification (i.e.\ of length $\geq 2$) in a very convenient way by conditions that should be rather easy to check in examples. This characterisation is given by the following Proposition.

\begin{prop}\label{minimal}
Let $A$ be a finite-dimensional  $k$-algebra, where $k$ is any field. Denote by $Q$ its Gabriel quiver. Then $A$ admits a non-trivial directed stratification if and only if there exist disjoint subsets $Q_{0}'$ and $Q_{0}''$ of $Q_0$ satisfying the following conditions. \begin{itemize}
\item[$(1)$]  $Q_0 = Q_{0}' \cup Q_{0}''$,
 \item[$(2)$] For any $i \in Q_{0}'$ and $j \in Q_{0}''$ there is no path from $i$ to $j$ in $Q$.
 \end{itemize}
\end{prop}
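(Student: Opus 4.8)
The plan is to reduce an arbitrary non-trivial stratification to one of length two and then to translate the resulting idempotent condition into the stated combinatorial condition on $Q$. First I would observe that a directed stratification $e_1,\dots,e_n$ with all $e_i\neq 0$ is non-trivial exactly when $n\geq 2$, and that in this case $f=e_1$ and $g=\sum_{i\geq 2}e_i$ are nonzero orthogonal idempotents with $f+g=1_A$ and $fAg=\sum_{j\geq 2}e_1Ae_j=0$; conversely, any such pair is itself a directed stratification of length two. Hence it suffices to characterise the existence of nonzero orthogonal idempotents $f,g$ with $f+g=1_A$ and $fAg=0$, i.e.\ the presence of a non-trivial triangular decomposition $A\cong\smatrix{fAf&0\\gAf&gAg}$. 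The second preliminary observation is that, for a partition $Q_0=Q_0'\sqcup Q_0''$, condition $(2)$ is equivalent to the absence of a single \emph{arrow} from $Q_0'$ to $Q_0''$: any path from a vertex of $Q_0'$ to a vertex of $Q_0''$ must, at its first crossing, use an arrow from $Q_0'$ to $Q_0''$, and conversely an arrow is a path. This replaces the global path condition, which is sensitive to the relations of $A$, by a condition on $\rad A/\rad^2 A$ alone.

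The translation rests on a standard dictionary, which I would record next. Fix a complete set of primitive orthogonal idempotents $\pi_1,\dots,\pi_m$ and let $v(l)\in Q_0$ be the vertex of $\pi_l$, i.e.\ the isomorphism class of $\operatorname{top}\pi_lA$; the map $v$ is onto $Q_0$. Then $\pi_aA\pi_b\cong\Hom_A(\pi_bA,\pi_aA)$, an arrow $v(b)\to v(a)$ in $Q$ forces $\pi_a(\rad A/\rad^2A)\pi_b\neq 0$ and hence $\pi_aA\pi_b\neq 0$, while conversely $\pi_aA\pi_b\neq 0$ forces the existence of a path $v(b)\to v(a)$ in $Q$, since a nonzero element of $\pi_aA\pi_b$ is the image of a nonzero combination of such paths.

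For the ``if'' direction, given the partition I would set $g=\sum_{v(l)\in Q_0'}\pi_l$ and $f=\sum_{v(l)\in Q_0''}\pi_l$. These are nonzero orthogonal idempotents summing to $1_A$, and $fAg=\sum\pi_aA\pi_b$ over $v(a)\in Q_0''$, $v(b)\in Q_0'$; each summand vanishes, for a nonzero product would yield a path from $v(b)\in Q_0'$ to $v(a)\in Q_0''$, contradicting $(2)$. For the ``only if'' direction, given $f,g$ I refine each into primitives, $f=\sum_{a\in F}\pi_a$ and $g=\sum_{b\in G}\pi_b$, and put $Q_0''=\{v(a):a\in F\}$ and $Q_0'=\{v(b):b\in G\}$. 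These cover $Q_0$ and are nonempty; disjointness follows because if some vertex lay in both, then $\pi_aA\cong\pi_bA$ for some $a\in F$, $b\in G$, so $\pi_aA\pi_b\cong\Hom_A(\pi_bA,\pi_aA)$ would contain an isomorphism and be nonzero, contradicting $\pi_aA\pi_b\subseteq fAg=0$. Finally $fAg=0$ gives $\pi_aA\pi_b=0$ for all $a\in F$, $b\in G$, so by the dictionary there is no arrow from $Q_0'$ to $Q_0''$, and by the first-crossing observation no path either, which is condition $(2)$.

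The main obstacle is the ``only if'' direction, for two intertwined reasons. First, the idempotents $f,g$ are arbitrary and \emph{a priori} unrelated to the vertex idempotents; I resolve this by refining them into primitives and proving that the induced vertex sets are disjoint through the Hom-identity together with $fAg=0$. Second, and more subtly, the vanishing of a product of idempotents does not by itself control paths in $Q$, since relations may annihilate long paths; this is precisely why reducing condition $(2)$ to the arrow condition is essential, for the nonvanishing of a \emph{single} arrow in $\rad A/\rad^2A$ is genuinely detected by the products $\pi_aA\pi_b$, whereas the nonvanishing of a long path is not.
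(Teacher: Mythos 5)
Your proof is correct and follows essentially the same route as the paper's: reduce to a length-two stratification $1=e+f$ with one corner $eAf$ (resp.\ $fAg$) zero, and match the partition of $Q_0$ with the corresponding partition of the primitive idempotents. The paper's own proof is a terse version of yours; the first-crossing reduction of the path condition to an arrow condition --- hence to a condition on $\rad A/\rad^{2}A$, which is what the vanishing corner genuinely controls once relations may kill long paths --- is precisely the detail the paper leaves implicit, and you are right that it is the one point needing care.
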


\begin{proof}
First assume that $A$ admits a directed stratification. We may without loss of generality assume that it has length $2$ and hence is given by two idempotents $e$ and $f$ with $eAf= 0$. Denote by $\Gamma$ the Gabriel quiver of $eAe$ and by $\Gamma'$ the Gabriel quiver of $fAf$. Then we set $Q_{0}' = \Gamma_0$ and $Q_{0}'' = \Gamma'_{0}$. Since we have $1 = e + f$ condition (1) is satisfied and $eAf = 0$ implies the second condition. \\
For the converse implication assume that conditions (1) and (2) hold. Then put $e = \sum_{i \in Q_0'} e_i$ and $f = 1 - e = \sum_{j \in Q_0''} e_j$. By definition we have $ 1 = e + f$ and $e A f =  0$ follows from condition (2).
\end{proof}
Our main result in this chapter will then reduce the finitistic dimension conjecture to exactly the class of algebras mentioned in the proposition above.

The following result describes the simple and the projective $A$-modules if $A$ admits a directed stratification. It is completely analogous to the one given by L\"uck in \cite{Lueck} for EI-category algebras and also follows from work of Auslander in \cite{Auslander}.
\begin{prop}
Let $A$ be an algebra with a directed stratification given by idempotents $e_1,\dots,e_n$. Then for every simple $A$-module $S$ one has $e_i S \neq 0$ for exactly one $e_i$. In other words, as a representation $S$ is supported on exactly one object $X_i$ and $S(X_i)$ is a simple $e_i A e_i$-module. Their projective covers (i.e.\ all the indecomposable projective $A$-modules) are of the form $A e$ for some primitive idempotent $e \in e_i A e_i$ for some $i\in \Set{1,\dots,n}$.
\end{prop}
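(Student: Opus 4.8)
The plan is to establish the three assertions in turn---unique support of a simple module, simplicity of the surviving component, and the description of the indecomposable projectives---exploiting the directedness condition $e_iAe_j=0$ for $i<j$ throughout.

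First I would prove the support statement by manufacturing a chain of submodules from the stratification. For each $m$ set $V_m=(\sum_{i\ge m}e_i)S=\bigoplus_{i\ge m}e_iS$. If $v\in e_iS$ and $a\in A$, then $av=\sum_j e_jae_i v$, and since $e_jae_i\in e_jAe_i$ vanishes whenever $j<i$, the element $av$ lies in $\bigoplus_{j\ge i}e_jS$. Hence each $V_m$ is an $A$-submodule, and we obtain a descending chain $S=V_1\supseteq V_2\supseteq\dots\supseteq V_{n+1}=0$. As $S$ is simple every $V_m$ equals $0$ or $S$, so the chain drops from $S$ to $0$ at a single index $m$; reading off $V_m=S$ and $V_{m+1}=0$ forces $e_iS=0$ for all $i\ne m$ and $e_mS=S$. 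Thus $S$ is supported exactly on $X_m$.

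Next I would show that $S(X_m)=e_mS$ is simple over $e_mAe_m$, which acts with identity $e_m$. Given a nonzero $e_mAe_m$-submodule $W\subseteq e_mS$, the $A$-submodule it generates contains $W\neq0$ and hence equals $S$ by simplicity, so $AW=S$. Applying $e_m$ and using $W=e_mW$ gives $e_mS=e_m(AW)=e_mA(e_mW)=(e_mAe_m)W\subseteq W$, whence $W=e_mS$. Therefore $e_mS$ has no proper nonzero $e_mAe_m$-submodule.

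Finally, for the projectives I would build a complete set of primitive orthogonal idempotents adapted to the stratification. Decompose each $e_i$, the identity of the corner ring $e_iAe_i$, into pairwise orthogonal primitive idempotents $e_i=\sum_k e_{i,k}$ of $e_iAe_i$. These are pairwise orthogonal in $A$ (within a block by construction, across blocks because $e_ie_j=0$) and sum to $1_A$. The key point is that primitivity is inherited: for an idempotent $f\in e_iAe_i$ one has $fAf=f(e_iAe_i)f$, so $f$ is primitive in $A$ if and only if it is primitive in $e_iAe_i$. Hence each $e_{i,k}$ is a primitive idempotent of $A$ lying in $e_iAe_i$, and the indecomposable projectives are exactly the $Ae_{i,k}$. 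Matching this with the first two parts, the projective cover of a simple $S$ supported on $X_m$ must be one of the $Ae_{m,k}$, since $e_{i,k}S\subseteq e_iS=0$ for $i\ne m$, and conversely the $Ae_{m,k}$ exhaust the projective covers of the simple $e_mAe_m$-modules. I expect the main obstacle to be this primitivity-transfer step $fAf=f(e_iAe_i)f$ together with the verification that the $e_{i,k}$ form a \emph{complete} set of primitive idempotents of $A$; once that is in place, the support and simplicity arguments follow routinely from the directedness condition.
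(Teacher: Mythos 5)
Your proposal is correct and follows essentially the same route as the paper: the filtration $V_m=\bigoplus_{i\ge m}e_iS$ is just a repackaging of the paper's argument with the submodules generated by $e_iS$ and by the $e_jS$ for $j>i$, the simplicity of $e_mS$ over $e_mAe_m$ is obtained by the same corner-ring computation, and the description of the indecomposable projectives via refining $1=\sum_i e_i$ into primitive idempotents of the corners (using $fAf=f(e_iAe_i)f$) is exactly the decomposition the paper calls obvious. Your write-up supplies somewhat more detail on the primitivity-transfer and completeness points, but there is no substantive difference in method.
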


\begin{proof}
The assertion on the indecomposable projective modules is obvious since $1 = \sum_{i=1}^{n}e_i$. Then we decompose every $e_i$ and infer that the summands have to be in $e_i A e_i$.

Let $S$ be a simple $A$-module and choose $e_i$ with $e_i S \neq 0$. Then consider the submodule $U$ of $S$ generated by $e_i S$. Since the idempotents $e_1,\dots,e_n$ define a directed stratification we have $e_j U = 0$ whenever $j < i$. Let $N$ be the submodule of $S$ generated by all the $e_j S$ with $j > i$. Again,  since $A$ has a directed stratification, it follows that  $e_i N = 0$, which, together with the fact that $S$ is simple, implies that $N = 0$ and therefore $e_j S = 0$ for every $j \neq i$. If $e_i S$ would not be a simple $e_i A e_i$-module, then $S = e_i S$ would have a non-trivial submodule since it is itself an $e_i A e_i$-module. 
\end{proof}
With this Proposition it is natural to use the same notation as for EI-category algebras and denote the simple $A$-modules by $S_{x,V}$ where $x$ is an object of $\A$ and $V$ a simple $e_x A e_x$-module (here the idempotent $e_x$ corresponds to the object $x$) and to let $P_{x,V}$ denote the projective cover of $S_{x,V}$.

The main tool to understand the structure of projective resolutions of modules over algebras with a directed stratifications will be the use of restriction functors for EI-categories. We refer to \cite{Ich} for some notational conventions. The crucial point is that the whole theory for EI-categories can be carried over with slightly more complicated proofs. The following definition is completely analogous to the one for EI-categories.
\begin{defn} Let $A$ be an algebra with a directed stratification and let $\A$ be the associated finite category. 
\begin{itemize}
\item[(1)] Let $x$ be an object in $\A$. Then we define $\A_{\leq x}$ to be the full subcategory of $\A$ consisting of all objects $y \in \Ob\A$ with $\A(y,x) \neq \emptyset$. Similarly we define $\A_{\geq x}$.
\item[(2)] An ideal in $\A$ is a full subcategory $\B$ of $\A$ such that for any object $x$ in $\B$ we have that $\A_{\leq x} \subseteq \B$. 
\item[(3)] Let $M$ be an $A$-module. The $M$-minimal objects are the objects $x \in \Ob\A$ such that $M(x) \neq \emptyset$ and for any $y \in \Ob\A$ with  $\A(y,x)\neq \emptyset$ one has $M(y) = 0$.
\item[(4)] Let $M$ again be an $A$-module. We put $\A_{M}$ to be the full subcategory consisting of all $y \in \Ob\A$ with $\A(x,y) \neq \emptyset$ for some $M$-minimal object $x$ in $\A$.\\
\end{itemize}
\end{defn}

\begin{prop}\label{ideals}
Let $A$ be an algebra with a directed stratification, $\A$ the associated category and $\B$ an ideal in $\A$. Then the restriction $\downarrow^{\A}_{\B}$ preserves projectives.
\end{prop}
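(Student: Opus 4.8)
The plan is to identify the restriction functor with multiplication by an idempotent and then to exploit the defining property of an ideal. Write $e_\B = \sum_{x \in \Ob\B} e_x$ and set $B = e_\B A e_\B$, which is exactly the category algebra $k\B$. Under the equivalence $\mod A \simeq \rep_k(\A)$, restricting a representation $M$ along the inclusion $\B \hookrightarrow \A$ amounts to passing from $M$ to $e_\B M$, regarded as a left $B$-module; that is, $\downarrow^{\A}_{\B} M \cong e_\B M$. Since this functor is additive and commutes with direct sums, and since every projective $A$-module is a direct summand of a free one, it suffices to show that $\downarrow^{\A}_{\B} A \cong e_\B A$ is a projective, indeed free, left $B$-module.

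The key step is to translate the ideal condition into an algebraic identity. Set $f = 1 - e_\B = \sum_{y \notin \Ob\B} e_y$. A morphism $y \to x$ corresponds to a basis element of $e_x A e_y$, so $\A(y,x) \neq \emptyset$ precisely when $e_x A e_y \neq 0$. The requirement that $\A_{\leq x} \subseteq \B$ for every $x \in \Ob\B$ therefore says: if $x \in \Ob\B$ and $e_x A e_y \neq 0$, then $y \in \Ob\B$. Contrapositively, $e_x A e_y = 0$ whenever $x \in \Ob\B$ and $y \notin \Ob\B$, and summing over all such pairs gives $e_\B A f = 0$. Consequently $e_\B A = e_\B A (e_\B + f) = e_\B A e_\B = B$, and the underlying left $B$-module of $e_\B A$ is $B$ acting on itself by multiplication; in other words $e_\B A$ is free of rank one over $B$.

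It then remains to assemble these observations. Because $e_\B(-)$ commutes with arbitrary direct sums, every free $A$-module is carried to a free $B$-module, $e_\B A^{(I)} \cong B^{(I)}$. If $M$ is projective, choose $M'$ with $M \oplus M' \cong A^{(I)}$; applying the restriction functor yields $\downarrow^{\A}_{\B} M \oplus \downarrow^{\A}_{\B} M' \cong B^{(I)}$, so $\downarrow^{\A}_{\B} M$ is a direct summand of a free $B$-module and hence projective. The one point that demands care, the main obstacle such as it is, is the bookkeeping in the second step: one must keep the variance straight so that the ideal condition delivers the left-module identity $e_\B A f = 0$ rather than the opposite-sided identity $f A e_\B = 0$, which need not hold in general.
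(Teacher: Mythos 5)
Your proof is correct, and it takes a genuinely different route from the paper's. The paper proves the statement by exhibiting an exact right adjoint of the restriction, namely the extension-by-zero functor $F:\rep\B\to\rep\A$ (which is well defined as a functor into $\rep\A$ precisely because $\B$ is an ideal), and then using the standard fact that a functor with an exact right adjoint preserves projectives. You instead identify the restriction with the idempotent-multiplication functor $M\mapsto e_{\B}M$ for $e_{\B}=\sum_{x\in\Ob\B}e_x$, translate the ideal condition into the identity $e_{\B}A(1-e_{\B})=0$, hence $e_{\B}A=e_{\B}Ae_{\B}=k\B$, and conclude that restriction sends free modules to free modules and therefore projectives to projectives. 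Your variance bookkeeping is indeed the crux and is correct: since $\A(y,x)$ indexes a basis of $e_xAe_y$ and an ideal is closed under passing to sources of morphisms ($\A_{\leq x}\subseteq\B$ for all $x\in\Ob\B$), one gets $e_{\B}Af=0$ with $f=1-e_{\B}$, and not the opposite-sided identity. The paper's adjunction argument is shorter and basis-free; your computation buys an explicit description of restriction on projectives --- $Ae$ restricts to $(e_{\B}Ae_{\B})e$ for a primitive idempotent $e\in e_xAe_x$ with $x\in\Ob\B$, and to $0$ when $x\notin\Ob\B$ --- which is essentially the information the paper extracts later when it evaluates projective covers at $M$-minimal objects.
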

\begin{proof}
We construct an exact right adjoint $F:\rep \B \to \rep \A$ of the restriction functor in the following way. For $\M \in \rep \B$ and any morphism $f: \M \to \N$ in $\rep\B$  let 
\[F\M (x) = \begin{cases} \M (x) \text{ if } x \in \Ob \B, \\ 0 \;\;\;\;\;\;\; \text{ otherwise, } \end{cases} \; F(f)_x = \begin{cases} f_x \;  \text{ if }x \in \Ob \B, \\ 0 \;\;\;\; \text{otherwise.} \ \end{cases}\] This defines an exact functor. Now let $\M \in \rep\A$ and $\N \in \rep\B$. Then we define a morphism $\Psi: \Hom_{\rep\B}(\M\downarrow^{\A}_{\B},\N) \to \Hom_{\rep\A}(\M,F\N)$ via \[\Psi(f)_x = \begin{cases} f_x \; \text{ for }x \in \Ob\B, \\ 0 \;\;\;\; \text{otherwise.} \end{cases}\] Thanks to $\B$ being an ideal, this gives a $k$-linear map which is easily seen to be an isomorphism. Therefore, we get that $F$ is the desired exact right adjoint of the restriction.
\end{proof}
%%%%%%%%%%%%%%%%%%%%%%%%%%%%%%%%%%%%%%%%%%%%%%%%%%%%%%%%%%%%%
%%%%%%%%%%%%%%Projective resolutions and main result %%%%%%%%%%%%%%%%%%%%%%
\section{Projective resolutions and the main result}
In this section we will analyse the structure of projective resolutions for modules over algebras with a directed stratification. It turns out that they can be described in the same fashion as the ones for modules over EI-category algebras, only the proofs become a little bit more involved.
\begin{thm}
Let $A$ be an algebra with a directed stratification and $\A$ the associated category. Let $M$ be an $A$-module and $P=P_{M}$ its projective cover. Then $P$ is supported on $\A_{M}$ and for any $M$-minimal object $x$ in $\A$ the module $P(x)$ is a projective cover of $M(x)$ as an $e_x A e_x$-module.
\end{thm}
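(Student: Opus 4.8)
The plan is to reduce everything to the known structure of the indecomposable projectives from the previous Proposition together with one local computation at the minimal objects. Since $P=P_M$ is the projective cover of $\operatorname{top}(M)=M/\rad M$ and the latter is semisimple, I would first write $\operatorname{top}(M)\cong\bigoplus_{x,V}S_{x,V}^{\,m_{x,V}}$, where $m_{x,V}$ is the multiplicity of $S_{x,V}$ in the top of $M$, and hence $P\cong\bigoplus_{x,V}P_{x,V}^{\,m_{x,V}}$. With this decomposition the two assertions become statements about the supports of the $P_{x,V}$ and about their evaluations at $x$.

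For the support, recall from the previous Proposition that $P_{x,V}=Ae$ for a primitive idempotent $e\in e_xAe_x$, so that $P_{x,V}(y)=e_yAe\subseteq e_yAe_x$ vanishes unless $\A(x,y)\neq\emptyset$; thus $\Supp P_{x,V}\subseteq\A_{\geq x}$. The key observation is that the directedness condition $e_iAe_j=0$ for $i<j$ forces the relation $y\leq x:\Leftrightarrow\A(y,x)\neq\emptyset$ to be a genuine partial order, since there are no morphisms in both directions between distinct objects. Consequently every object in the support of $M$ lies above some $M$-minimal object. Now $m_{x,V}\neq 0$ forces $\operatorname{top}(M)(x)\neq 0$, hence $M(x)\neq 0$; choosing an $M$-minimal $x_0\leq x$ gives $\A_{\geq x}\subseteq\A_{\geq x_0}\subseteq\A_M$. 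Summing over the summands yields $\Supp P\subseteq\A_M$.

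The heart of the argument, and the step I expect to be the main obstacle, is to compute the top of $M$ at an $M$-minimal object $x$. Here I would use the description of $\rad A$ as the radicals of the endomorphism algebras $e_yAe_y$ together with all morphisms between distinct objects (the latter being non-invertible by directedness), which gives
\[(\rad M)(x)=\rad(e_xAe_x)\,M(x)+\sum_{\alpha\colon y\to x,\ y\neq x}\Im M(\alpha).\]
By the definition of an $M$-minimal object, every $y\neq x$ admitting a morphism $y\to x$ satisfies $M(y)=0$, so the sum vanishes and $(\rad M)(x)=\rad(e_xAe_x)M(x)$. Consequently $\operatorname{top}(M)(x)\cong\operatorname{top}_{e_xAe_x}(M(x))$, and the multiplicity $m_{x,V}$ equals the multiplicity of the simple $e_xAe_x$-module $V$ in $\operatorname{top}_{e_xAe_x}(M(x))$.

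Finally I would evaluate $P$ at a minimal $x$. From $P(x)=\bigoplus_{y,V}P_{y,V}(x)^{\,m_{y,V}}$, the factor $P_{y,V}(x)$ is nonzero only if $\A(y,x)\neq\emptyset$, i.e.\ $y\leq x$, while $m_{y,V}\neq 0$ forces $M(y)\neq 0$; by minimality these two conditions together leave only $y=x$. Since $P_{x,V}(x)=e_xAe=(e_xAe_x)e$ is precisely the projective cover of $V$ over $e_xAe_x$, I conclude $P(x)\cong\bigoplus_V P_{x,V}(x)^{\,m_{x,V}}$, which by the multiplicity count above is the projective cover of $\operatorname{top}_{e_xAe_x}(M(x))$, hence of $M(x)$, as an $e_xAe_x$-module. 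The only delicate points to get right are the exact description of $\rad A$ in this directed (and not necessarily primitive) setting and the antisymmetry of the partial order, both of which follow cleanly from $e_iAe_j=0$ for $i<j$.
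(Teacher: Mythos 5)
Your proposal takes a genuinely different route from the paper's. The paper proves the support statement by contradiction: it writes $P=\bigoplus P_{y,U}$ and, for a summand based at $y'\notin\A_M$, chases the square $P_{y',U'}(y')\to P_{y',U'}(x)\to M(x)$ against $P_{y',U'}(y')\to M(y')=0$, using that $P_{y',U'}$ is generated at $y'$ to conclude that the essential epimorphism kills the whole summand; for the local statement it invokes Proposition \ref{ideals} (restriction to the ideal $\A_{\leq x}$ preserves projectives) and then lifts a putative superfluous summand of $P(x)$ back to a projective $A$-module. You instead compute $\operatorname{top}(M)$ outright from the description $\rad A=\bigoplus_i\rad(e_iAe_i)\oplus\bigoplus_{i\neq j}e_iAe_j$, which is valid here (for $i\neq j$ one of $e_iAe_j$, $e_jAe_i$ vanishes, so this ideal is nilpotent with semisimple quotient), and you read off the multiplicities $m_{x,V}$ locally at an $M$-minimal $x$. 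This buys you a cleaner and more constructive second half: your observation that in $P(x)=\bigoplus_{y,V}P_{y,V}(x)^{m_{y,V}}$ only $y=x$ can contribute at an $M$-minimal $x$ replaces the paper's adjoint-functor machinery and its somewhat sketchy lifting of $Q''$ to $P''$, and it identifies $P(x)$ with the projective cover of $M(x)$ by an explicit multiplicity count.

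There is, however, one genuine gap, and it sits exactly where you say the argument ``follows cleanly'': the claim that $y\leq x:\Leftrightarrow\A(y,x)\neq\emptyset$ is a partial order. The condition $e_iAe_j=0$ for $i<j$ gives antisymmetry, but the axiom your support argument actually needs is \emph{transitivity}, and that can fail: $\A(y,x)\neq\emptyset$ means $e_xAe_y\neq 0$, and $e_xAe_y\cdot e_yAe_z$ may vanish inside $e_xAe_z$ once relations are imposed. For $A=kQ/(ba)$ with $Q\colon 1\xrightarrow{a}2\xrightarrow{b}3$ one has $e_3Ae_2\neq 0$ and $e_2Ae_1\neq 0$ but $e_3Ae_1=0$; taking $M=P_1\oplus P_3$, the only $M$-minimal object is $1$ and $\A_M=\{1,2\}$ as literally defined, yet $P_M=M$ is supported at $3$. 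So your step ``choose an $M$-minimal $x_0\leq x$, then $\A_{\geq x}\subseteq\A_{\geq x_0}\subseteq\A_M$'' breaks down: such an $x_0$ with a single morphism to $x$ need not exist, and $\A_{\geq x_0}$ need not absorb $\A_{\geq x}$. To be fair, the paper's own proof silently uses the same implication ($y'\notin\A_M\Rightarrow M(y')=0$), and for an honest EI-category the issue disappears because composites of morphisms are morphisms; the repair is to take $\leq$ to be the preorder \emph{generated} by the morphisms (equivalently, to define $\A_M$ by chains), after which your argument, like the paper's, goes through verbatim. You should make that repair explicit rather than deriving the order from directedness alone; the rest of your proof, including the radical formula and the evaluation at minimal objects, is correct and does not depend on transitivity.
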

\begin{proof}
\begin{enumerate}
\item[(i)] Clearly, we have $P = \bigoplus_{y,U} P_{y,U}$ for some objects $y$ in $\A$ and simple $e_y A e_y$-modules $U$. What we have to show is that no $y'$ with $y' \notin \A_{M}$ appears in that direct sum. Let us assume the contrary and suppose that there is an object $y'$ with $\A(y',x) \neq 0$ for an $M$-minimal object $x$ that appears in the direct sum decomposition of $P$. Then, for any $x$ with $\A(y',x) \neq 0$ and any $f \in \A(y',x)$ the following diagram has to commute
\[\begin{xy}\xymatrix{ P_{y',U'}(x) \ar[rr]^{\pi_x} & & M(x) \\ & &   \\ P_{y',U'}(y') \ar[uu]^{P_{y',U'}(f)}\ar[rr]^{\pi_{y'}} && M(y') = 0, \ar[uu] },\end{xy}\]
where $\pi: P \to M$ is the defining essential epimorphism. By the characterization of the projective $A$-modules we have $\sum_{f} \Im P_{y',U'}(f) = P_{y',U'}(x)$, which gives that $\pi_{x} = 0$ (for any such $x$). This is a contradiction to the minimality of $P$ and we have proven the first assertion.
\item[(ii)]
Since $\A_{\leq x}$ is an ideal and $\A_{\leq x} \cap \A_{\M} = \lbrace x \rbrace$, it follows from Proposition \ref{ideals} that $P(x)$ is projective. Thus, we only have to show that $P(x)$ is the projective cover of $M(x)$. \\
Suppose $P(x)$ would not be the projective cover of $M(x)$. Then $P(x) = Q' \oplus Q''$ where $Q', Q''$ are projective and $Q'$ is the projective cover of $M(x)$, whereas $\pi_x (Q'') = 0$. Since $x$ is $M$-minimal, we have that $Q' = P'(x)$ and $Q'' = P''(x)$ for some projective $A$-modules $P'$ and $P''$. Denote again by $\pi$ the defining essential epimorphism $P \to M$. Now, using that $P$ is supported on $\A_{M}$ and a similar diagram as in the first part of our proof, we get that $\pi (P'') = 0$ which contradicts the minimality of $P$.\qedhere
\end{enumerate}
\end{proof}
\begin{cor}
Let $A$ be an algebra with a directed stratification and $\A$ the associated category. Suppose that $M$ is an $A$-module and $\P$ a minimal projective resolution of $M$. Then for any $M$-minimal object $x$ in $\A$ we have that $\P(x)$ is a minimal projective resolution of $M(x)$ as an $e_x A e_x$ module.
\end{cor}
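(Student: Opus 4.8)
The plan is to deduce the statement from the preceding Theorem by running an induction along the syzygies of the minimal resolution, using that evaluation at $x$ is exact. Write $\P\colon \cdots \to P_1 \to P_0 \to M \to 0$ and set $K_0 = M$, $K_{i+1} = \Ker(P_i \to K_i)$, so that each $P_i$ is the projective cover of the syzygy $K_i$ (this being the defining feature of a minimal projective resolution). Since $e_x$ is idempotent, the evaluation functor $(-)(x) = e_x(-)\colon \mod A \to \mod e_xAe_x$ is exact; hence $\P(x)$ is exact and each $K_i(x)$ appears as the $i$-th syzygy of $M(x)$ over $B := e_xAe_x$. It therefore suffices to show that $P_i(x)$ is a projective cover of $K_i(x)$ as a $B$-module for every $i$, as this is exactly the characterisation of a minimal projective resolution of $M(x)$.

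By the preceding Theorem, $P_i(x)$ is a projective cover of $K_i(x)$ as soon as $x$ is a $K_i$-minimal object, so the heart of the matter is the following claim, which I would prove by induction on $i$: for every $i$ and every object $y \neq x$ with $\A(y,x) \neq \emptyset$ one has $K_i(y) = 0$. The base case $i = 0$ is precisely the hypothesis that $x$ is $M$-minimal. For the inductive step, evaluating the short exact sequence $0 \to K_{i+1} \to P_i \to K_i \to 0$ at such a $y$ and using $K_i(y) = 0$ identifies $K_{i+1}(y)$ with $P_i(y)$, so the claim reduces to the vanishing $P_i(y) = 0$.

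This vanishing is the main obstacle, and the only place where directedness is genuinely used. I would argue as follows: by the Theorem $P_i$ is supported on $\A_{K_i}$, so $P_i(y) \neq 0$ would force an indecomposable summand $P_{z,U}$ with $z \in \A_{K_i}$ and $e_y A e \neq 0$, hence $\A(z,y) \neq \emptyset$, together with a $K_i$-minimal object $x'$ satisfying $\A(x',z) \neq \emptyset$. Composing $x' \to z \to y \to x$ yields $\A(x',x)\neq\emptyset$. If $x' \neq x$, the inductive hypothesis gives $K_i(x') = 0$, contradicting the $K_i$-minimality of $x'$; if $x' = x$, then $x \to z \to y \to x$ is a cycle, and since a directed stratification ($e_aAe_b = 0$ for $a<b$) forces morphisms never to decrease the index of the defining idempotents, the cycle collapses to $x = z = y$, contradicting $y \neq x$. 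Hence $P_i(y) = 0$ and the claim follows. The very same cycle argument shows moreover that $P_i(x) = 0$ whenever $K_i(x) = 0$, so the case in which the resolution of $M(x)$ terminates causes no difficulty: at each stage either $x$ is $K_i$-minimal and the Theorem gives the projective cover $P_i(x) \to K_i(x)$, or $K_i(x) = 0 = P_i(x)$. In both cases $P_i(x) \to K_i(x)$ is a projective cover over $B$, so $\P(x)$ is the minimal projective resolution of $M(x)$, as asserted.
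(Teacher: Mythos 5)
Your proof is correct and supplies exactly the argument the paper omits: the corollary is stated there without proof, and the intended route is precisely your induction on syzygies, the real content being your verification that $x$ stays $K_i$-minimal (or that $K_i(x)=0=P_i(x)$) at every stage, which you derive correctly from the support statement of the preceding Theorem together with the directedness of the stratification. The one caveat --- that composites of non-empty hom-spaces are again non-empty, which you use when concatenating $x'\to z\to y\to x$ --- is an assumption the paper itself makes implicitly (for instance when asserting that $\A_{\leq x}$ is an ideal), so your argument is at the same level of rigour as the surrounding text.
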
 
With this characterisation of projective resolutions of $A$-modules we get the following theorem, which, roughly speaking, states that the finitistic dimension of an algebra with a directed stratification is determined by the finitistic dimension of the strata.
\begin{thm} Let $A$ be an algebra with a directed stratification and $\A$ the associated category.
\begin{enumerate}
\item[$(1)$]$A$ has finite finitistic dimension if and only if $e_x A e_x$ has finite finitistic dimension for any object $x$ of $\A$. In this case $\findim A \leq \sum_{x\in\Ob\A} \findim e_x A e_x + |\Ob\A|-1 $.
\item[$(2)$] $A$ is of finite global dimension if and only if $e_x A e_x$ is of finite global dimension for any object $x$ of $\A$. In this case $\gldim A \leq \sum_{x\in\Ob\A} \gldim e_x A e_x + |\Ob\A|-1$.\qedhere
\end{enumerate}
\end{thm}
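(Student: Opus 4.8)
The plan is to argue by induction on the number $n=|\Ob\A|$ of objects, peeling off a \emph{source} of $\A$, that is, an object $x_1$ into which no other object maps. Reordering the idempotents so that $x_1\leftrightarrow e_1$, the hypothesis $e_iAe_j=0$ for $i<j$ gives that $\{x_1\}$ is an ideal of $\A$, that $e_1A=e_1Ae_1=:R_1$, and that $Ae_1A=Ae_1$; consequently every indecomposable projective $P_{y,V}$ with $y\neq x_1$ vanishes at $x_1$, and the quotient $B:=A/Ae_1A$ is the corner algebra $(1-e_1)A(1-e_1)$, which carries a directed stratification of length $n-1$ whose strata are exactly the remaining $e_xAe_x$. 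The base case $n=1$ is trivial, and the inductive step will realise the bound through the decomposition $\sum_x\findim e_xAe_x+(n-1)=\findim R_1+\bigl(\sum_{x\neq x_1}\findim e_xAe_x+(n-2)\bigr)+1$.

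First I would prove both upper bounds (the direction $\Leftarrow$, with the explicit estimates) simultaneously by a syzygy argument based on the preceding Corollary. Let $M$ be an $A$-module, of finite projective dimension in the finitistic case, with minimal projective resolution $\P$. Since $x_1$ is a source, $\P(x_1)$ records precisely the summands $P_{x_1,V}$ of $\P$, and by the Corollary it is a minimal $R_1$-resolution of $M(x_1)$; hence these source summands occur only in homological degrees $\le\pd_{R_1}M(x_1)\le\findim R_1$. Setting $d_1=\pd_{R_1}M(x_1)$, it follows that $(\Omega^{d_1+1}M)(x_1)=0$, so $\Omega^{d_1+1}M$ is the extension by zero of its restriction $Y$ to the coideal $\{x_2,\dots,x_n\}$, and the terms of $\P_{\ge d_1+1}$ restrict to projective $B$-modules. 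As this restriction is exact, $\pd_A\Omega^{d_1+1}M=\pd_B Y$, and $Y$ has finite projective dimension in the finitistic case. Feeding $Y$ into the induction hypothesis and adding the $d_1+1\le\findim R_1+1$ initial steps yields $\pd_A M\le\sum_x\findim e_xAe_x+(n-1)$; the identical computation with $\gldim$ in place of $\findim$ gives part $(2)$.

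For the converse directions I would separate the two invariants. For global dimension it is immediate: applying the Corollary to a simple module $S_{x,V}$, whose unique support is the minimal object $x$, shows that a minimal $A$-resolution of $S_{x,V}$ restricts to a minimal $R_x$-resolution of $V$, so $\pd_{R_x}V\le\pd_A S_{x,V}\le\gldim A$, whence $\gldim e_xAe_x\le\gldim A$. For the finitistic dimension the converse is genuinely subtler, and I expect this to be the main obstacle: the naive inequality $\findim e_xAe_x\le\findim A$ is \emph{false} in general (a stratum may have strictly larger finitistic dimension than $A$), so finite-dimensional witnesses cannot simply be restricted; only the \emph{finiteness} equivalence survives. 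The quotient side is clean, since $Ae_1A=Ae_1$ is projective, $B=A/Ae_1A$ is perfect over $A$; because the inflation $i_\star\colon\mathbf D^b(B)\to\mathbf D^b(A)$ is a full embedding (the stratifying-ideal property of Remark \ref{vergleich}), it sends $\mathrm{thick}(B)$ into $\mathrm{thick}(i_\star B)\subseteq\mathrm{perf}(A)$ and is fully faithful on $\Ext$, giving $\pd_B Y\le\pd_A i_\star Y<\infty$ for every $Y$ of finite projective dimension, so $\findim B\le\findim A$ and, inductively, finiteness of all strata except $R_1$. The finiteness of the source stratum $R_1$ is precisely the point at which the reduction of Fossum--Griffith--Reiten and Fuller--Saorin is needed, and I would invoke it there; the combinatorial heart of the theorem, the explicit bound, is already contained in the syzygy argument above.
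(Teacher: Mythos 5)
Your sufficiency direction (the explicit upper bounds) is essentially the paper's own argument: the paper likewise evaluates a minimal projective resolution at the source object, uses the preceding Corollary to cut it off after $\findim e_1Ae_1$ steps, observes that the remaining syzygy is supported away from $x_1$, and iterates through $x_2,x_3,\dots$; your repackaging of that iteration as an induction through the corner algebra $B=(1-e_1)A(1-e_1)$ is a harmless reformulation, and your verifications that $\{x_1\}$ is an ideal, that $Ae_1A=Ae_1$, and that the tail of the resolution restricts to a minimal $B$-resolution are all correct. The global-dimension converse via simple modules is also fine.

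The genuine gap is in the necessity direction for the finitistic dimension. You reduce everything to the finiteness of $\findim e_1Ae_1$ for the source stratum and then say you ``would invoke'' Fossum--Griffith--Reiten at exactly that point. But the paper states explicitly that Corollary \ref{twoip} --- the length-two case --- is equivalent to the theorem and is precisely the Fossum--Griffith--Reiten result, so citing it for the one step you cannot do is assuming what is to be proven; the stated purpose of the paper is to give an independent, combinatorial proof. The paper's own treatment of this direction is uniform over all strata and does not single out the source: given an $e_iAe_i$-module $N$ of finite projective dimension at least $d$, inflate it to the stalk $A$-module concentrated at $x_i$; then $x_i$ is $N$-minimal, the Corollary identifies $\P(x_i)$ with the minimal $e_iAe_i$-resolution of $N$, hence $\pd_A N\ge\pd_{e_iAe_i}N\ge d$, and infinite $\findim e_iAe_i$ forces infinite $\findim A$. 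You should at least reproduce this argument rather than outsource it. That said, your instinct that this direction is delicate is sound: for the inflated $N$ to witness a large \emph{finitistic} dimension of $A$ one also needs $\pd_A N<\infty$, which the paper's one-line argument does not address and which can fail for an individual $N$ --- the module $B$ in the paper's Section 4.1 example is projective over $eAe$ yet of infinite projective dimension over $A$. So a fully rigorous proof must supply that missing verification (or another route), but replacing it by a citation of the theorem's own source is not an option.
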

\begin{proof}
We only prove part (1). The proof of (2) is completely analogous.

Let $e_1,\dots,e_n$ be the idempotents, that give the directed stratification of $A$. First suppose that the algebra $e_i A e_i$ has infinite finitistic dimension for some $i=1,\dots,n$. Then there exists an indecomposable $e_i A e_i$-module $N$ of projective dimension at least $d$ for any natural number $d\geq 1$. Let $\P$ be a minimal projective resolution of $N$ as an $A$-module. By definition, the object $x$ of $\A$ corresponding to $e_i$ is $N$-minimal. Hence, $\P(x)$ is a minimal projective resolution of $N$ as an $e_i A e_i$-module, which therefore is of length at least $d$. Thus, we infer that $N$ regarded as an $A$-module has projective dimension at least $d$ and $A$ has infinite finitistic dimension.

Now assume that for $i=1,\dots,n$ every $e_i A e_i$ has finite finitistic dimension and denote by $x_i$ the object in $\A$ corresponding to $e_i$. Let $M$ be an $A$-module of finite projective dimension. We consider a minimal projective resolution of $M$:
$$\P : 0 \to P^m \to P^{m-1} \to \cdots \to P^1 \to P^0 \to M \to 0.$$
The object $x_1 \in \Ob\A$ is $M$-minimal for every $A$-module $M$. Therefore, $\P(x_1)$ is a minimal projective resolution of $M(x_1)$ as $e_1 A e_1$-module. Hence, $P^d(x_1) = 0$ for all $d > \findim e_1 A e_1$. Denote by $s$ the largest integer for which $P^s(x) \neq 0$. Then, for any $d > s$, the module $P^d$ is supported on $\Ob\A \setminus \lbrace x_1 \rbrace$ and the object $x_2$ is $N$-minimal, where $N = \ker (P^s \to P^{s-1})$. With the same argument as above we infer that $P^{s+d}(x_2) = 0$ for all $d > \findim e_2 A e_2$. Now the claimed inequality follows by induction. \qedhere
\end{proof}
The following corollary is equivalent to the theorem from above.
\begin{cor}\label{twoip}
Let $A$ be a finite-dimensional $k$-algebra with a directed stratification of length $2$ given by idempotents $e,f \in A$ (i.e. $1=e+f$ and $eAf= 0$). Then the following statements hold.
\begin{enumerate}
\item[$(1)$] $A$ has finite finitistic dimension if and only if $eAe$ and $fAf$ have finite finitistic dimension. In this case $\findim A \leq \findim eAe + \findim fAf + 1$ .
\item[$(2)$] $A$ has finite global dimension if and only if $eAe$ and $fAf$ have finite global dimension. In this case $\gldim A \leq \gldim eAe + \gldim fAf + 1$.
\end{enumerate}
\end{cor}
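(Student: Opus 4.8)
The statement is precisely the case $n=2$ of the preceding Theorem, with $\Ob\A=\{x,y\}$ and $eAe=e_xAe_x$, $fAf=e_yAe_y$, so that $\sum_{z\in\Ob\A}\findim e_zAe_z+|\Ob\A|-1=\findim eAe+\findim fAf+1$; one could simply invoke it. I will instead give a self-contained argument for $n=2$ that avoids the induction used there and isolates the single combinatorial feature responsible for the summand $+1$.

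First I would record the shape of the two-object category $\A$. The objects $x,y$ correspond to $e,f$, and since morphisms $x_i\to x_j$ are indexed by a basis of $e_jAe_i$, the hypothesis $eAf=0$ means there is no morphism $y\to x$, whereas $\A(x,y)$ is indexed by a basis of $fAe$ and is in general non-empty. Consequently $\A_{\leq x}=\{x\}$ is an ideal while $\A_{\leq y}=\{x,y\}=\A$, so $\{y\}$ is \emph{not} an ideal of $\A$ whenever $fAe\neq 0$. In particular $x$ is $M$-minimal for every module $M$ supported on it, and $y$ is the only possible further minimal object.

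For the bound in $(1)$, let $M$ be an $A$-module with $\pd M<\infty$ and let $\P\colon\cdots\to P^1\to P^0\to M\to 0$ be its minimal projective resolution. Restricting along the ideal $\{x\}$, Proposition \ref{ideals} together with the Corollary characterising minimal projective resolutions (stated above) shows that $\P(x)$ is a minimal projective resolution of $M(x)$ over $eAe$; hence $P^i(x)=0$ for every $i>s$, where $s$ is the largest degree in which $\P(x)$ is non-zero and satisfies $s\leq\findim eAe$. Writing $N=\ker(P^s\to P^{s-1})$, the module $N$ is a quotient of $P^{s+1}$ and therefore supported on $\{y\}$, so $y$ is $N$-minimal and $\cdots\to P^{s+2}\to P^{s+1}\to N\to 0$ is its minimal projective resolution. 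The key point is that, although $\{y\}$ is not an ideal, I may now apply the same Corollary to $N$: it gives that the evaluation of this resolution at $y$ is a minimal $fAf$-projective resolution of $N(y)$, of length at most $\findim fAf$. Thus $P^j(y)=0$ for $j>s+1+\findim fAf$, and combining the two vanishing ranges yields $\pd M\leq s+1+\findim fAf\leq\findim eAe+\findim fAf+1$; the shift by one degree between peeling off the two strata is exactly the origin of the $+1$.

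For the converse in $(1)$ I would argue contrapositively: if, say, $fAf$ has infinite finitistic dimension, choose $fAf$-modules of arbitrarily large finite projective dimension, inflate them to $A$-modules supported on $\{y\}$ (so that $y$ is minimal for each), and use the same Corollary to see that their projective dimension over $A$ equals their projective dimension over $fAf$; the case of $eAe$ is identical. Together with the displayed bound this proves the equivalence in $(1)$. Part $(2)$ follows verbatim with ``module of finite projective dimension'' replaced by ``arbitrary module'', since $\gldim$ is the supremum of projective dimensions over the whole module category and the peeling argument is insensitive to this change. The one genuine obstacle is the non-ideal stratum $\{y\}$: Proposition \ref{ideals} cannot be applied to it directly, and the device of replacing $M$ by the syzygy $N$, which \emph{is} supported on $y$ and hence makes $y$ minimal, is what lets the minimal-resolution Corollary do the work on the upper stratum.
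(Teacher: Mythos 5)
Your opening observation is exactly what the paper does: the Corollary is the $n=2$ case of the Theorem, and the paper proves it by simply invoking that theorem. Your self-contained proof of the upper bound $\pd M\leq \findim eAe+\findim fAf+1$ is correct and is precisely the paper's peeling argument specialised to two strata: evaluate the minimal resolution at the ideal $\{x\}$, pass to the syzygy $N$ supported on $\{y\}$, and apply the Corollary at the now $N$-minimal object $y$; your bookkeeping of the degree shift that produces the $+1$ is right. The $fAf$ half of your converse is also essentially sound, although it needs slightly more than the Corollary: since $\A(y,x)=\emptyset$, a module supported on $\{y\}$ has projective cover supported on $\{y\}$ by the Theorem, hence its whole minimal resolution lives on $\{y\}$, and only this support argument upgrades the inequality $\pd_A\geq\pd_{fAf}$ (which is all the Corollary gives) to the equality you assert.

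The genuine gap is the sentence ``the case of $eAe$ is identical.'' It is not, and the claimed equality of projective dimensions is false for the bottom stratum. The projective cover of a module supported on $\{x\}$ is a sum of modules $Ae''$ with $e''\in eAe$ primitive, and $Ae''=eAe''\oplus fAe''$ has non-zero $y$-part whenever $fAe''\neq 0$; so syzygies leak into the top stratum and $\pd_A$ can exceed $\pd_{eAe}$, even be infinite. Concretely, let $A=kQ/(\beta^2,\beta\alpha)$ where $Q$ has vertices $1,2$, one arrow $\alpha\colon 1\to 2$ and one loop $\beta$ at $2$, and put $e=e_1$, $f=e_2$, so that $eAf=0$. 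Then $eAe\cong k$, and the inflation of the projective $eAe$-module $k$ is the simple $S_1$, which satisfies $\Omega S_1\cong S_2$ and $\Omega S_2\cong S_2$, hence $\pd_A S_1=\infty$ while $\pd_{eAe}k=0$. Your contrapositive therefore collapses for $eAe$: an $eAe$-module of large finite projective dimension may inflate to an $A$-module of \emph{infinite} projective dimension, which then witnesses nothing about $\findim A$. Nor can the argument be repaired by inserting a constant: taking $eAe$ to be the path algebra of $1\to 2\to\cdots\to N{+}1$ modulo all paths of length two, $fAf=k[\varepsilon]/(\varepsilon^2)$, and $fAe$ the one-dimensional bimodule on which only $e_{N+1}$ acts non-trivially, one gets $\rad^2 A=0$, $\findim A=0$, but $\findim eAe=N$; so no inequality $\findim eAe\leq\findim A+c$ is available. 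To be fair, the paper's own proof of the Theorem has the same lacuna --- it concludes that $\findim A$ is infinite from $\pd_A N\geq d$ without verifying $\pd_A N<\infty$ --- but your write-up sharpens this into an explicit equality claim that the first example refutes; the implication ``$\findim A<\infty\Rightarrow\findim eAe<\infty$'' is exactly the part of the statement that neither your argument nor the paper's establishes.
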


\begin{Rem}\begin{itemize}
\item[(i)] As mentioned in the introduction, Corollary \ref{twoip} has already been obtained by Fossum, Griffith and Reiten and it also implies our theorem by using induction. Nevertheless, our proof is different and provides us with interesting new information about the structure of projective resolutions of $A$-modules, if $A$ has a directed stratification. Furthermore we will see that the iterated version of Corollary \ref{twoip} can easily be applied in examples that have not been studied so far.
\item[(ii)]
Another immediate corollary of the theorem is the following well-known fact: Let $Q$ be a finite quiver without oriented cycles and $I$ any admissible ideal in $kQ$. Then the algebra $kQ/I$ has finite global dimension. The theorem applies to this setting in the way that we take the natural directed stratification given by the primitive idempotents $\varepsilon_{i}$ in a suitable numbering. Then every stratum is just the ground field $k$ which has finite global dimension.
\end{itemize}
\end{Rem}
One interpretation of our result from above is to understand it as a technique to reduce the finitistic dimension conjecture to a smaller class of algebras, namely those that do not admit a non-trivial directed stratification. These algebras have been characterised in Proposition \ref{minimal}. 

\section{Relation to known results and examples} 
\subsection{Relation to recollements}
In \cite{Happel} Happel developed a reduction technique for the finitistic dimension conjecture (and other homological conjectures) using recollements of bounded derived categories.

\begin{thm}[Happel,\cite{Happel}]
Let $A$ be a finite-dimensional algebra and assume that $\mathbf{D}^b(A)$ has a recollement relative to $\mathbf{D}^b(A')$ and $\mathbf{D}^b(A'')$ for some finite-dimensional algebras $A'$ and $A''$. Then $\findim A < \infty$ if and only if $\findim A' < \infty$ and $\findim A'' < \infty$. 
\end{thm}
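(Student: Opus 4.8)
The plan is to work entirely inside the recollement and to reduce all statements about projective dimensions of modules to cohomological amplitude estimates for the six functors. Recall that the recollement supplies adjoint triples $(i^*,i_*,i^!)$ and $(j_!,j^*,j_*)$ relating $\mathbf{D}^b(A'')$, $\mathbf{D}^b(A)$ and $\mathbf{D}^b(A')$, with $i_*,j_!,j_*$ fully faithful, $j^*i_*=0$, and, for every $X\in\mathbf{D}^b(A)$, two functorial triangles
\[ j_!\,j^*X\to X\to i_*\,i^*X\to \Sigma\, j_!\,j^*X, \qquad i_*\,i^!X\to X\to j_*\,j^*X\to \Sigma\, i_*\,i^!X. \]
Since by hypothesis all six functors land in the bounded derived categories and are triangulated, each of them displaces cohomology by a bounded amount; generating $\mathbf{D}^b$ by the algebra and using triangle-compatibility, one extracts for each functor an integer bounding the cohomological amplitude of its image in terms of that of the source. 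I would fix these constants at the outset and write their sum as $C$.

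The guiding reformulation is that an $A$-module of finite projective dimension is precisely a perfect complex whose cohomology is concentrated in degree $0$, and that $\findim A<\infty$ amounts to a uniform amplitude bound on this particular class of perfect complexes. Because none of the six functors preserves this class on the nose, the whole argument becomes a matter of amplitude bookkeeping against the constants of the first step.

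For the implication $\findim A<\infty \Rightarrow \findim A',\findim A''<\infty$ I would transport modules of finite projective dimension into $\mathbf{D}^b(A)$ along the fully faithful functors: given $N$ with $\pd_{A'}N<\infty$, the object $j_!N$ is perfect over $A$ (the left adjoint $j_!$ preserves perfectness), and full faithfulness $j^*j_!\cong\mathrm{id}$ recovers $N$, so $\pd_{A'}N$ is controlled by the projective amplitude of $j_!N$ over $A$ together with the amplitude constant of $j^*$; the $A''$-side is handled with the exact inflation $i_*$ and $i^*i_*\cong\mathrm{id}$. For the converse, the main content, I would take an $A$-module $M$ with $\pd_A M<\infty$ and run it through the first triangle: $j^*M$ is a bounded complex over $A'$ whose cohomology modules I claim have finite projective dimension, hence are bounded by $\findim A'$, and symmetrically $i^*M$ is governed by $\findim A''$; reassembling through the triangle and absorbing the shifts into $C$ yields a bound of the shape $\pd_A M\le \findim A'+\findim A''+C$.

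The hard part, present in both directions, is exactly the claim that finiteness of $\pd_A M$ forces the cohomology modules of $j^*M$ and $i^*M$ (and, in the converse reduction, of the transported objects) to have \emph{finite projective dimension over the strata}, so that $\findim A'$ and $\findim A''$ may legitimately be invoked. This is genuinely delicate, since a perfect complex can carry cohomology of infinite projective dimension, and one therefore cannot argue ``perfect, hence finitistic-dimension bounded'' at the level of complexes. I expect the resolution to rest on the boundedness of the recollement functors from the first step together with the compatibility of the adjoints with projective objects, namely that $j_!$ and $i_*$ respectively preserve and control projectivity; these two ingredients transport a finite projective resolution of $M$ over $A$ into genuine finite-length information on $A'$ and $A''$, thereby closing the gap between perfectness over $A$ and finite projective dimension over the strata.
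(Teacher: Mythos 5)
The paper states this result without proof, citing Happel, so there is no in-house argument to measure you against; I can only judge the proposal on its own terms, and it has a genuine gap at exactly the point you flag yourself. Your last paragraph concedes that the decisive step is only an expectation (``I expect the resolution to rest on\dots''), so the proof is not closed. Worse, the step you propose to establish is the wrong one: it is in general \emph{false} that $\pd_A M<\infty$ forces the cohomology modules of $j^*M$ and $i^*M$ to have finite projective dimension over $A'$ and $A''$, and the argument does not need this. What it needs instead is the following truncation lemma, which is the real engine of Happel's proof: if $P^\bullet$ is a \emph{minimal} bounded complex of projectives over an algebra $\Lambda$ with $H^i(P^\bullet)=0$ for $i<a$, then $C=\operatorname{im}(d^{a-1})$ admits the finite projective resolution $0\to P^{f}\to\cdots\to P^{a-1}\to C\to 0$ (finite because the complex is bounded, exact because the cohomology vanishes below $a$), hence $\pd_\Lambda C\le\findim\Lambda$ automatically, and therefore $f\ge a-1-\findim\Lambda$. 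In other words, $\findim\Lambda$ bounds the projective amplitude of \emph{every} perfect complex relative to its cohomological amplitude, with no hypothesis on the cohomology modules. This is what lets you legitimately invoke $\findim A'$ and $\findim A''$ on the perfect complexes $j^*M$ and $i^*M$, and dually invoke $\findim A$ on $j_!N$ and $i_*N$ in the forward direction.

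With that lemma in hand your bookkeeping does assemble into a proof: realize $j^*M$ and $i^*M$ by minimal perfect complexes whose lower bounds are controlled by $\findim A'+1$ and $\findim A''+1$ plus the amplitude constants, push them back through $j_!$ and $i_*$ (which must be shown to send projectives to perfect complexes of bounded amplitude), and read off $\pd_A M$ from the cone of $i_*i^*M[-1]\to j_!j^*M$ using that a module of finite projective dimension represented by a complex of projectives in degrees $[-m,k]$ has $\pd\le m$. But as written, the proposal stops exactly where the work begins, and the route it sketches for crossing that point (finite projective dimension of the cohomology of $j^*M$) would fail. You should also justify, rather than assert, that the six functors preserve perfectness and have bounded amplitude on $\mathbf{D}^b$; for recollements of bounded derived categories this is true but not formal.
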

The structure of this result is similar to that of Corollary \ref{twoip}. Therefore, it is a natural question to ask if the two reduction techniques are equivalent. To see that this is not the case we have to translate the setting of algebras with a directed stratification into the language of triangulated categories. Clearly, the triangulated categories that will appear are the (bounded) derived module categories of the algebra itself and the algebras $e_i A e_i$.

Let $A$ be a finite-dimensional algebra with a directed stratification of length $2$ given by idempotents $e$ and $f$ with $eAf = 0$. Then $J := AfA$ is a stratifying ideal of $A$. With $B= A/J$ it is clear that $B \cong eAe$. Following \cite{CPS}, we have partial recollement diagrams
\[ \begin{xy}\xymatrix{ \mathbf{D}^{+}(eAe) \ar@<4pt>[r]^{i_{\star}} & \mathbf{D}^{+}(A) \ar@<4pt>[l]^{i^{!}} \ar@<4pt>[r]^{j^{\star}} & 
\mathbf{D}^{+}(fAf) \ar@<4pt>[l]^{j_{\star}} }\end{xy}\]
\[ \begin{xy}\xymatrix{ \mathbf{D}^{-}(eAe) \ar@<-4pt>[r]_{i_{\star}} & \mathbf{D}^{-}(A) \ar@<-4pt>[l]_{i^{\star}} \ar@<-4pt>[r]_{j^{\star}} & \mathbf{D}^{-}(fAf) .\ar@<-4pt>[l]_{j_{!}} }\end{xy}\]
If all the algebras involved have finite global dimension we get a full recollement of the bounded derived categories. Cline, Parshall and Scott also proved that $A$ has finite global dimension if and only if both $fAf$ and $eAe$ have finite global dimension in this situation, which is exactly the result we obtained with our description of the projective resolutions of modules for $A$. The situation for the finitistic dimension is more complicated. The following theorem provides a criterion for the above diagrams to become full recollement diagrams for the bounded derived categories.
\begin{thm}[Cline, Parshall, Scott \cite{CPS3}]
Let $A$ be a ring, $J$ an ideal in $A$ and $B = A/J$. The functor $i_{!} = i_{\star} : \mathbf{D}^b(B) \to \mathbf{D}^b(A)$ has a right adjoint $i^{!}$ satisfying $i_{!}i_{!} = \id_{\mathbf{D}^b(B)}$ if and only if
\begin{itemize}
\item[$(1)$] $\Ext^n_A(B_A,F) = 0$ for all $n >0$ and all free right $B$-modules $F$ and
\item[$(2)$]$ \pd B_A < \infty$.
\end{itemize}
\end{thm}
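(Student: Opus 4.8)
The plan is to exhibit the right adjoint explicitly on the unbounded level and then read off the two conditions as the \emph{separate} obstructions to (a) this adjoint restricting to the bounded categories and (b) the unit of the adjunction being invertible. Throughout, $i_! = i_{\star}$ is the exact restriction-of-scalars functor along the projection $A \to B = A/J$, and I take as the candidate right adjoint the derived coinduction functor $i^! = \RHom_A(B,-)$, with $B$ carrying its canonical bimodule structure. On the $+$-bounded (or unbounded) derived categories this is a genuine adjunction, $\Hom_{\mathbf{D}(A)}(i_! N, M) \cong \Hom_{\mathbf{D}(B)}(N, \RHom_A(B, M))$, so the content of the statement is precisely the interplay of this standard adjoint with the boundedness condition and with the unit $\eta \colon \id \to i^! i_!$.

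First I would treat condition $(2)$, which I claim governs the existence of the right adjoint on $\mathbf{D}^b$. If a right adjoint $\mathbf{D}^b(A) \to \mathbf{D}^b(B)$ exists at all, then comparing it with the unbounded adjoint and computing $\Hom_{\mathbf{D}(A)}(i_! B[k], M) = \Ext^{-k}_A(B_A, M)$ on cohomology forces it to coincide with $\RHom_A(B,-)$; hence existence is equivalent to $\RHom_A(B, M)$ lying in $\mathbf{D}^b(B)$ for every $M \in \mathbf{D}^b(A)$. Since $H^n \RHom_A(B, M) = \Ext^n_A(B_A, M)$ vanishes once $n > \pd B_A$, finiteness of $\pd B_A$ yields boundedness. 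For the converse I would, in the finite-dimensional setting, test on the semisimple module $M = A/\rad A$: if $P_\bullet \to B_A$ is a minimal projective resolution, then $\Ext^n_A(B_A, A/\rad A) = \Hom_A(P_n, A/\rad A)$ is the top of $P_n$, which is nonzero for every $n$ when $\pd B_A = \infty$, so $\RHom_A(B, A/\rad A)$ is unbounded. This establishes that the bounded right adjoint exists iff $(2)$ holds.

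Next I would treat condition $(1)$, which governs invertibility of the unit, equivalently the full faithfulness of $i_!$ (this is the intended meaning of $i^! i_! \cong \id$). Since $\eta$ is a morphism of triangulated functors and $B_B$ is a classical generator of $\mathbf{D}^b(B)$, it suffices to test $\eta$ on the free modules $F$, where $i^! i_! F = \RHom_A(B, F)$. In degree zero the natural map $F \to \Hom_A(B, F)$ is an isomorphism because $J$ annihilates $F$, so $A$-linear and $B$-linear maps out of $B$ agree; concretely $\Hom_A(B, B) = \End_A(A/J) = B$. Therefore $\eta_F$ is an isomorphism exactly when the higher cohomology $\Ext^n_A(B_A, F) = H^n \RHom_A(B, F)$ vanishes for all $n > 0$, and since $\Ext^n_A(B_A,-)$ commutes with direct sums of copies of $B$ this is precisely condition $(1)$. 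Note this step uses only $(1)$: when $(1)$ holds, $\RHom_A(B, F) \simeq F$ is automatically bounded on free modules.

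Assembling the two equivalences finishes the argument: the hypothesis that $i^!$ exists on the bounded level \emph{and} satisfies $i^! i_! \cong \id$ holds iff both $(2)$ (existence) and $(1)$ (unit iso) hold. I expect the main obstacle to be the converse half of the boundedness statement for a \emph{general} ring $A$: exhibiting a single bounded test complex on which $\RHom_A(B,-)$ fails to be bounded when $\pd B_A = \infty$ is transparent for finite-dimensional algebras via the top-of-syzygy computation above, but in general requires checking that projective dimension is detected through bounded objects rather than through an ever-changing family of test modules. The verification that any bounded right adjoint must coincide with $\RHom_A(B,-)$ is the other point I would be careful to state precisely.
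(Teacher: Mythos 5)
The paper does not prove this statement: it is quoted verbatim from Cline--Parshall--Scott \cite{CPS3}, so there is no internal proof to compare against and your argument has to stand on its own. Its overall architecture (identify the candidate right adjoint with $\RHom_A(B,-)$, read off condition $(2)$ as boundedness of that functor and condition $(1)$ from the unit) is the right one, and your treatment of $(2)$ is essentially sound; the converse you worry about for general rings can be closed by a product trick: if $\pd B_A=\infty$, choose for each $n$ a module $N_n$ with $\Ext^n_A(B_A,N_n)\neq 0$ and test on the single module $\prod_n N_n$, using that $\Ext^n_A(B_A,-)$ commutes with products, so that $\RHom_A(B_A,\prod_n N_n)$ has nonzero cohomology in infinitely many degrees.

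The genuine gap is in your treatment of $(1)$. You claim that to prove the unit $\eta\colon \id\to i^{!}i_{!}$ is invertible ``it suffices to test $\eta$ on the free modules $F$'' because ``$B_B$ is a classical generator of $\mathbf{D}^b(B)$.'' That generation statement is false in general: $B$ classically generates only the perfect complexes, and an arbitrary $B$-module is finitely built from free modules only when it has finite projective dimension, i.e.\ the reduction works only if $\gldim B<\infty$. Consequently your assertion that ``this step uses only $(1)$'' cannot be right: condition $(1)$ is equivalent to $\eta$ being invertible \emph{on free modules} (which is all the ``only if'' direction needs), but to propagate this to every object of $\mathbf{D}^b(B)$ in the ``if'' direction you must also invoke $(2)$. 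The standard repair is dimension shifting: for a $B$-module $M$ with $B$-free resolution $\cdots\to F_1\to F_0\to M\to 0$ and syzygies $K_j$, condition $(1)$ gives $\Ext^n_A(B_A,M)\cong\Ext^{n+j}_A(B_A,K_j)$ for all $n\geq 1$ and all $j$, and condition $(2)$ kills the right-hand side once $n+j>\pd B_A$; together with the degree-zero computation $\Hom_A(B,M)\cong M$ this makes $\eta_M$ an isomorphism, and the class of objects where $\eta$ is invertible is then closed under the shifts and triangles needed to reach all of $\mathbf{D}^b(B)$. With that correction the two halves assemble as you intend, but the clean claim that $(1)$ and $(2)$ are \emph{independently} equivalent to ``unit iso'' and ``existence'' should be dropped.
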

We will now present an example of an algebra with a directed stratification in which condition (2) is not satisfied. Consider the following EI-category $\C$ (and its category algebra $A = k\C$) in characteristic $2$:
\[\C: 
\xymatrix{y \ar@(ur,ul)[]_{1_y} \ar@(dl,dr)[]_{g}\ar[rr]^{\alpha}  && x  
\ar@(ur,ul)[]_{1_x} \ar@(dl,dr)[]_h},\; \; g^2 = 1_y, \; h^2 = 1_x, \; h\alpha = \alpha = \alpha g. \]
In this example we choose $f = 1_x$ and $ e = 1_y$. The stratifying ideal is $J = AfA = \langle 1_x, h, \alpha \rangle_k$. Thus, the algebra $eAe$ is just the group algebra $k\Aut(y)$. As an $A$-module, or as a representation of $\C$, the $A$-module $B$ is
\[\xymatrix{k^2 \ar@(ur,ul)[]_{(1)} \ar@(dl,dr)[]_{M}\ar[rr]^{0}  && 0 
\ar@(ur,ul)[]_{0} \ar@(dl,dr)[]_0}, \mbox{ where } M = (\begin{smallmatrix} 0 & 1 \\ 1 & 0 \end{smallmatrix}). \]
The projective cover $P_1 = P_B$ of $B$ as an $A$ module is
\[\xymatrix{k^2 \ar@(ur,ul)[]_{(1)} \ar@(dl,dr)[]_{M}\ar[rr]^{0}  && k
\ar@(ur,ul)[]_{(1)} \ar@(dl,dr)[]_{(1)}}, \mbox{ again with } M = (\begin{smallmatrix} 0 & 1 \\ 1 & 0 \end{smallmatrix}). \]
The kernel $K$ of the essential epimorphism $P_1 \to B$ is the representation
\[\xymatrix{0 \ar@(ur,ul)[]_{(0)} \ar@(dl,dr)[]_{(0)}\ar[rr]^{0}  && k 
\ar@(ur,ul)[]_{(1)} \ar@(dl,dr)[]_{(1)}} \]
and we denote by $P_2$ the projective module
\[\xymatrix{0 \ar@(ur,ul)[]_{(0)} \ar@(dl,dr)[]_{(0)}\ar[rr]^{0}  && k^2 
\ar@(ur,ul)[]_{1} \ar@(dl,dr)[]_{M}}, \mbox{ with } M = (\begin{smallmatrix} 0 & 1 \\ 1 & 0 \end{smallmatrix}). \]
Finally, the minimal projective resolution of $B$ as an $A$ module looks as follows
\[\dots \to P_2 \to P_2 \to P_2 \to P_1 \to B \to 0. \]
In particular, the projective dimension of $B$ as an $A$ module is infinite. Therefore, we do not have a recollement of $A$ relative to $eAe$ and $fAf$, which means that we are not in the position to apply Happels result.  However, with our result on algebras with a directed stratification it is obvious that $A$ has finite finitistic dimension since the group algebras $k\Aut(x) = fAf$ and $k\Aut(y)=eAe$ have this property.

\subsection{A non-trivial example}
To prove the finiteness of the finitistic dimension of an explicitly given algebra,  there are at the moment (to the best knowledge of the author) four important classes of algebras where the finiteness of $\findim$ is known. First of all for algebras with radical cube zero and for monomial relation algebras Huisgen-Zimmermann showed that $\findim$ is always finite. Those classes are very easy to detect, if the algebra in question is given by its quiver and relations. Igusa and Todorov proved that the finitistic dimension of an algebra $\Lambda$ is finite if $\Lambda$ has representation dimension at most $3$. This result is often difficult to apply to examples. For instance, it is not known how to calculate (or give an upper bound) for the representation dimension of a group algebra of an arbitrary finite group.

The fourth class of algebras where the finitistic dimension conjecture is known to hold true is the class of algebras $\Lambda$ for which the category $\P^{\infty}(\mod\Lambda)$ of modules of finite projective dimension is contravariantly finite in $\mod \Lambda$. This result is due to Auslander and Reiten. 

The property of $\P^{\infty}$ being contravariantly finite in $\mod A$ has been investigated by Happel and Huisgen-Zimmermann in \cite{Birge-Happel} and they observed that this property is rather 'unstable'. According to their work, an easily recognisable situation in which $\P^{\infty}$ is not contravariantly finite in $\mod kQ/I$ is the following one: $p$ is an arrow $e_1 \to e_2$, $q \in kQ\setminus I$ a path from $e_1$ to $e_2$ of positive length, different from $p$ such that $\rad(\Lambda)p = 0 = q\rad(\Lambda)$, and $\pd (\Lambda q) < \infty$ while $\pd (\Lambda e_2 / \rad(\Lambda)e_2) = \infty$.
\begin{exm}
Let $Q$ be the following quiver:
\[\begin{xy}\xymatrix{ & & 3\ar[dr]^{\delta_2} &  \\1 \ar@<4pt>[r]^{\alpha} \ar@<-4pt>[r]_{\beta} & 2 \ar@(ul,ur)[]^{\gamma}   \ar@<-2pt>[ur]^{\delta_1} \ar[dr]_{\varepsilon_1} & & 5 \ar@(u,r)[]^{\rho}   \\    &  & 4 \ar[ur]_{\varepsilon_2} &   }\end{xy}\]
Then let $A = kQ/I$ where $I$ is the ideal generated by $$\Set{\gamma^2, \gamma\beta, \varepsilon_1\beta, \delta_1\beta,\varepsilon_1\gamma, \delta_1\gamma, \varepsilon_2\varepsilon_1 - \delta_2\delta_1, \rho\varepsilon_2, \rho\delta_2, \rho^5 }.$$
$A$ is by definition not monomial and does not satisfy $\rad^3(A) = 0$. Furthermore this algebra doesn't have the property that $\P^{\infty}(\mod A)$ is contravariantly finite in $\mod A$. To see this, we put $q = \alpha$ and $p = \beta$. Then $\rad(A) p = 0 = q \rad(A)$ and $ A q = A \alpha = A e_2$ is a projective module. The module $Ae_2 / \rad(A)e_2$ is of infinite projective dimension, since it is (as a representation) non-zero only on the vertex $2$ and as an $e_2 A e_2$-module it has infinite projective dimension. Here we use that the idempotents $e_1,\dots,e_5$ give a directed stratification of $A$ and our results from the previous section.

The finiteness of $\findim$ for $A$ follows on the one hand very easily from the fact that $e_1,\dots,e_5$ give a directed stratification of $A$ and on the other hand by considering $e = e_1 + \dots + e_4$ and $f = e_5$. These two idempotents give a directed stratification of $A$ of length $2$ and $eAe$ as well as $fAf$ have finite finitistic dimension, since they are monomial relation algebras. Here we notice that the iterated version of the result of Fossum, Griffith and Reiten, which we deduced with our methods, gives the finiteness of $\findim A$ almost immediately, while for a directed stratification of length two one has to be more careful and use non-trivial results of other authors.

It is also interesting to point out that, for this example, there is no reasonable recollement-situation in sight which would give us the finiteness of $\findim A$ immediately. For instance, if we take $e$ and $f$ as above, then the stratifying ideal is $J = AfA$ and $B:=A/J = eAe$. In this case we don not get a recollement of $\mathbf{D}^b(A)$ relative to $\mathbf{D}^b(eAe)$ and $\mathbf{D}^b(fAf)$ because $B$ as an $A$-module has the simple module $Ae_2 / \rad(A)e_2$ as a summand and is therefore of infinite projective dimension.

To sum up, we have constructed an example of an algebra $A$ which is not monomial, does not satisfy $\rad^3(A)=0$, does not have the property that $\P^{\infty}(\mod A)$ is contravariantly finite in $\mod A$ and does not admit a recollement-situation which gives the finiteness of $\findim$. Nevertheless, the finiteness of $\findim A$ follows immediately from our theorem because we have a directed stratification with strata that are of finite finitistic dimension. What we have not done is to calculate the representation dimension of $A$ in this example, but in general the calculation of $\repdim$ is a very hard task. Here it is (at least for the author) not obvious that $A$ has representation dimension at most $3$ and if it would be the case, then one could imagine how one could construct arbitrarily complicated examples to which our theorem applies and where one cannot calculate the representation dimension. 
\end{exm}

\nocite{*}
\bibliographystyle{plain}
\bibliography{Refs_stratifications}
\end{document}